\newcommand{\BOX}{\ensuremath\Box}
\newtheorem{theorem}{Theorem}
\newtheorem*{theorem*}{Theorem}
\newtheorem{proposition}[theorem]{Proposition}
\theoremstyle{remark}
\newtheorem{remark}[theorem]{Remark}
\theoremstyle{definition}
\DeclareMathOperator{\Riesz}{R}
\newcommand{\N}{\mathbb{N}}
\newcommand{\R}{\mathbb{R}}
\newcommand{\ep}{\varepsilon}
\definecolor{darkgreen}{rgb}{0,0.5,0}
\definecolor{darkblue}{rgb}{0,0,0.7}
\definecolor{darkred}{rgb}{0.9,0.1,0.1}
\definecolor{lightblue}{rgb}{0,0.51,1}
\begin{document}

\title[Mild criticality breaking for the Navier-Stokes equations]{Mild criticality breaking for the Navier-Stokes equations}

\author[T. Barker]{Tobias Barker}
\address[T. Barker]{Mathematics Institute, University of Warwick, UK}
\email{tobiasbarker5@gmail.com}

\author[C. Prange]{Christophe Prange}
\address[C. Prange]{Cergy Paris Universit\'e, Laboratoire de Math\'ematiques AGM, UMR CNRS 8088, France}\email{christophe.prange@cyu.fr}

\maketitle

\noindent {\bf Abstract} In this short paper we prove the global regularity of solutions to the Navier-Stokes equations under the assumption that slightly supercritical quantities are bounded. As a consequence, we prove that if a solution $u$ to the Navier-Stokes equations blows-up, then certain slightly supercritical Orlicz norms must become unbounded. This partially answers a conjecture recently made by Terence Tao. The proof relies on quantitative regularity estimates at the critical level and transfer of subcritical information on the initial data to arbitrarily large times. This method is inspired by a recent paper of Aynur Bulut, where similar results are proved for energy supercritical nonlinear Schr\"odinger equations. 
\vspace{0.3cm}

\noindent {\bf Keywords}\, Navier-Stokes equations, quantitative estimates, regularity criteria, supercritical norms.

\vspace{0.3cm}

\noindent {\bf Mathematics Subject Classification (2010)}\, 35A99, 35B44, 35B65, 35Q30, 76D05

\section{Introduction}

This paper is concerned with regularity criteria for weak Leray-Hopf solutions to the three-dimensional Navier-Stokes equations
\begin{equation}\label{e.nse}
\partial_tu-\Delta u+u\cdot\nabla u+\nabla p=0,\qquad \nabla\cdot u=0,\qquad u(\cdot,0)=u_{0}(x),\qquad\textrm{in}\,\,\,\mathbb{R}^3\times (0,\infty).
\end{equation}
A weak Leray-Hopf solution $u$ is a distributional solution of the Navier-Stokes equations, is continuous in time with respect to the weak $L_{2}$ topology and satisfies the energy inequality
\begin{equation}\label{weakLeraydef}
E(u)(t):=\|u(\cdot,t)\|^2_{L^{2}(\mathbb{R}^3)}+2\int\limits_{0}^{t}\int\limits_{\mathbb{R}^3} |\nabla u(x,s)|^2 dxds\leq \|u_{0}\|^2_{L^{2}(\mathbb{R}^3)}.
\end{equation}
It is known that the Navier-Stokes equations are invariant with respect to the scaling symmetry
\begin{equation}\label{NSrescale}
(u_{\lambda}(x,t),p_{\lambda}(x,t))=(\lambda u(\lambda x, \lambda^2 t), \lambda^2 p(\lambda x, \lambda^2 t)),\,\,\,u_{0\lambda}(x)=\lambda u_{0}(\lambda x). \end{equation}
 Most of the known conditions ensuring regularity of the Navier-Stokes equations are formulated in terms of \textit{scale-invariant} quantities such as $u\in L^{5}_{x,t}$ \cite{ladyzhenskaya1967uniqueness} or $u\in L^{\infty}_{t}L^{3}_{x}$ \cite{ESS2003}.
The main obstacle to obtaining regularity of three-dimensional weak Leray-Hopf solutions of the Navier-Stokes equations without any additional assumptions is that the energy associated the solution is not scale-invariant. In particular,
$$E(u_{\lambda})(t)=\lambda^{-1}E(u)(\lambda^2 t).$$
Such quantities  that  scale to a negative power of $\lambda$ under the Navier-Stokes rescaling \eqref{NSrescale} are known as \textit{supercritical}.

Recently, under the assumption of  certain finite `slightly supercritical' quantities (that scale to a logarithm of $\lambda$ under the Navier-Stokes rescaling), regularity was shown \cite{pan2016regularity} and \cite{Tao19}. The aim of this paper is to provide another new example of a slightly supercritical regularity criteria for the Navier-Stokes equations. Our main theorem is the following. Note that throughout this paper, we use the notation of `suitable weak Leray-Hopf solutions'. In addition to being weak Leray-Hopf solutions, such solutions satisfy the local energy inequality in $\mathbb{R}^3\times (0,\infty)$ as defined in \cite[Definition 6.1]{gregory2014lecture}.

\begin{theorem}\label{theo.main}
For all $M\in[1,\infty)$ and $E\in[1,\infty)$ sufficiently large,\footnote{\label{foot.one}This means that there exists a universal constant $N_{univ}\in[1,\infty)$ such that for all $M\geq N_{univ}$ and $E\geq N_{univ}$ we have the result.} 
there exists $\delta(M,E)\in(0,\frac12]$ such that the following holds. Let $u$ be a suitable weak Leray-Hopf  solution to the Navier-Stokes equations \eqref{e.nse} on $\R^3\times(0,\infty)$ with initial data $u_0\in L^2(\R^3)\cap L^4(\R^3)$.\\
Assume that 
\begin{equation*}
\|u_0\|_{L^2},\ \|u_0\|_{L^4}\leq M,
\end{equation*}
and that
\begin{equation}\label{slightlysupercrithypo}
\|u\|_{L^\infty(0,\infty;L^{3-\delta(M,E)}(\R^3))}\leq E.
\end{equation}
Then, the above assumptions imply that $u$ is smooth on $\R^3\times (0,\infty)$. 
Moreover, there is an explicit formula for $\delta(M,E)$, see \eqref{e.defdelta} below, and $\delta(M,E)\rightarrow 0$ when $E\rightarrow\infty$ or $M\rightarrow\infty$.
\end{theorem}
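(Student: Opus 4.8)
The plan is to argue by contradiction. Suppose the (maximal) smooth solution issued by the data first develops a singularity at some finite time $T^{*}$, and derive that the slightly supercritical norm $\|u\|_{L^\infty(0,T^*;L^{3-\delta})}$ would then have to exceed $E$, once $\delta=\delta(M,E)$ is chosen appropriately. The three ingredients are: (i) the subcritical local theory generated by the $L^4$ datum; (ii) quantitative regularity at the \emph{critical} $L^3$ level, in the spirit of Tao's bounds and of the authors' quantitative estimates, i.e.\ an estimate of the form $\|u(t)\|_{L^\infty}\le \Phi(A)\,t^{-1/2}$ on any interval $[0,T]$ on which $\|u\|_{L^\infty_tL^3_x}\le A$, with $\Phi$ an explicit (roughly triple-exponential) function; and (iii) an interpolation-plus-bootstrap mechanism converting the slightly supercritical control into a critical one at the cost of only a sub-polynomial loss, which is the actual source of the ``mild'' (logarithmic) breaking.

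First I would treat the base case. Since $u_0\in L^2\cap L^4$ with norms $\le M$ and $L^4$ is subcritical, Kato-type local well-posedness produces a smooth mild solution on a time interval $[0,T_0]$ with $T_0=c(1+M)^{-C}$, along which $\|u(t)\|_{L^4}\le 2M$; interpolating with the energy bound $\|u(t)\|_{L^2}\le M$ gives $\|u(t)\|_{L^3}\lesssim M$, and parabolic smoothing gives $\|u(t)\|_{L^\infty}\le C(M)$ for $t\in[T_0/4,T_0]$. Weak–strong uniqueness identifies the given suitable weak Leray–Hopf solution with this smooth solution on $[0,T_0]$, and, continuing, with the maximal smooth solution on $[0,T^*)$; in particular the hypothesis \eqref{slightlysupercrithypo} is inherited by the smooth solution on $[0,T^*)$. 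This is the step where the subcritical information carried by $u_0$ gets injected into the problem.

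The heart of the matter is a continuity argument on $[T_0/2,T^*)$ for $N(T):=\sup_{t\in[T_0/2,T]}\|u(t)\|_{L^\infty}$. Given a provisional bound $N(T)\le 2K_0$, Lebesgue interpolation yields, for every $t\in[T_0/2,T]$,
\[
\|u(t)\|_{L^3}\le \|u(t)\|_{L^{3-\delta}}^{\,1-\delta/3}\,\|u(t)\|_{L^\infty}^{\,\delta/3}\le E^{\,1-\delta/3}(2K_0)^{\delta/3}=:A,
\]
so the solution is critically bounded on $[T_0/2,T]$ with constant $A$. Feeding this into the quantitative regularity estimate (started from time $T_0/4$, and using the $C(M)$-bound near $T_0$ to handle times $t\approx T_0/2$) gives $N(T)\le \Phi(A)\,C(M)$. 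Now fix $K_0=K_0(M,E):=\Phi(2E)\,C(M)$, and then choose $\delta=\delta(M,E)$ so small that $(2K_0)^{\delta/3}\le 2$, i.e.\ essentially $\delta(M,E)\simeq (\log K_0(M,E))^{-1}$; this is the explicit formula \eqref{e.defdelta}, and since $K_0\to\infty$ as $M\to\infty$ or $E\to\infty$ one gets $\delta(M,E)\to0$, and $\delta\le\tfrac12$ for $M,E$ large. With this choice $A\le 2E$, hence $N(T)\le \Phi(2E)C(M)=K_0<2K_0$, a strict improvement. Thus $\{T\in[T_0/2,T^*):N(T)\le 2K_0\}$ is open; being also closed and nonempty it is all of $[T_0/2,T^*)$. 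Therefore $u\in L^\infty_{x,t}$ up to $T^*$, which by Serrin's criterion forces smoothness past $T^*$, contradicting the definition of $T^*$. Hence $T^*=\infty$ and $u$ is smooth on $\mathbb{R}^3\times(0,\infty)$.

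The main obstacle is making the bootstrap actually close. Because $\Phi$ is only known to be, say, triple-exponential, one cannot let $A$ depend on the unknown bound $K_0$ in any essential way; the scheme survives only because the interpolation loss is $\|u(t)\|_{L^\infty}^{\delta/3}$, which is rendered harmless by taking $\delta$ logarithmically small in $K_0$. Getting the order of quantifiers right — fix the target $K_0$ from $M,E$, then choose $\delta$, then run the continuity argument — is the delicate point, together with verifying that the quantitative estimate is applied legitimately (it needs an a priori smooth, critically bounded solution on $[T_0/2,T]$, which is exactly what the induction hypothesis supplies) and that all small-time factors are absorbed into the $M$-dependent constants. The remaining pieces — the $L^4$ local theory, weak–strong uniqueness, and the final continuation argument — are standard.
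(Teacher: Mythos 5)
Your argument is correct, but it closes the estimate by a genuinely different mechanism than the paper. The paper follows Bulut's scheme: it first converts Tao's critical bound into a quantitative $L^5_{t,x}$ estimate (Proposition \ref{prop.step1}), slices $(c_4\|u_0\|_{L^4}^{-8},T)$ into a $T$-independent number of epochs on which the $L^5$ norm is at most $\ep=\tfrac1{2C_4}$, and propagates the subcritical $L^4$ datum across the epochs by iterating the $L^4$ energy inequality of Proposition \ref{prop.step2}, closing a bootstrap on $K=\|u\|_{L^\infty_tL^4_x}$ with $\delta\sim 1/\log K$ as in \eqref{e.defK}--\eqref{e.defdelta}. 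You instead run a continuity argument directly on $N(T)=\sup_{[T_0/2,T]}\|u(t)\|_{L^\infty}$, using the interpolation $\|u(t)\|_{L^3}\le\|u(t)\|_{L^{3-\delta}}^{1-\delta/3}\|u(t)\|_{L^\infty}^{\delta/3}$ and feeding the resulting critical bound straight into Tao's estimate \eqref{e.tao}; no $L^5$ slicing or $L^p$ energy estimates are needed. This buys a shorter proof and, in fact, a slightly larger admissible $\delta$: your $K_0\sim M^4\exp\exp\exp(C E^c)$ gives $\delta\sim(\log K_0)^{-1}\sim(\exp\exp(CE^c))^{-1}$, one exponential better than \eqref{e.defdelta}, and still compatible with Remark \ref{smallerdelta} and the Orlicz application. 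What the paper's route buys is independence from the pointwise-in-time $t^{-1/2}$ smoothing bound as the closing mechanism (the energy-estimate transfer is the part modeled on the NLS setting, where no such sup-norm bound exists) and an explicit uniform-in-time $L^\infty_tL^4_x$ bound $K(M,E)$ that is quoted later (Remark \ref{theo.mainlocalintime}). Two small points to tighten in your write-up: the critical bound you feed into Tao must cover the initial layer as well, so it should be $\max(CM,2E)$ rather than $2E$ (on $[0,T_0]$ you only have $\|u(t)\|_{L^3}\le\|u(t)\|_{L^2}^{1/3}\|u(t)\|_{L^4}^{2/3}\le 2M$, and $M$ and $E$ are independent parameters), with $K_0$ defined accordingly; and the identification of the suitable weak Leray--Hopf solution with the maximal strong solution should explicitly invoke the energy inequality for the weak--strong uniqueness step — both are routine and do not affect the structure of your argument.
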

Our method and result is directly inspired by the recent result of Bulut \cite{Bulut20} for the nonlinear supercritical Schr\"{o}dinger equation. In particular, Theorem 1 hinges on quantitative bounds for a Navier-Stokes solution belonging to the critical space $L^{\infty}_{t}L^{3}_{x}$, which were established by Tao in \cite{Tao19}; see also the subsequent paper by the authors \cite{BP20}. For other partial differential equations, it is often the case that a refined understanding of critical regimes can be used to prove `slightly supercritical' results. Such slightly supercritical results occur for the nonlinear wave equation \cite{tao2007global,CH19-wave}, the hyperdissipative Navier-Stokes equations \cite{tao2010global,barbato2015global,colombo2020generalized,CH19-NShyp}, the supercritical SQG equation \cite{dabkowski2012global,CZV16} and the fractional Burgers equation \cite{dabkowski2014global} to name a few.  
In these works the slight supercriticality is obtained by varying the power of the nonlinearity as in the first paper, or the strength of the fractional dissipation as in the last three. 

We call the result of Theorem \ref{theo.main} a \emph{mild breaking of the criticality}, or a \emph{mild supercritical regularity criteria}. Indeed, the supercritical space $L^{\infty}_tL^{3-\delta(M,E)}$ in which we break the scaling depends on the size $E$ of the solution in this supercritical space via $\delta(M,E)$. In other words this can be considered as a non effective regularity criteria, hence the term \emph{mild}. Moreover, given a solution $u$, assume that you knew all the $L^\infty_tL^{3-\delta}_x$ norms for $\delta\rightarrow 0$. Then the question whether Theorem \ref{theo.main} applies to $u$ or not becomes a question about how fast 
\begin{equation*}
\|u\|_{L^\infty(0,\infty;L^{3-\delta}(\R^3))}
\end{equation*} 
grows when $\delta\rightarrow 0$. 
Moreover, in view of (Step 3) below in Section \ref{sec.mildquant}, the fact that $\delta(M,E)$ depends on $E$ seems unavoidable; see in particular the definition of $\delta(M,E)$ in \eqref{e.defdelta}. As far as we know the only space in which regularity is known for arbitrarily large norm is $L^\infty_tL^3_x$. Let us also remark that the condition $u_0\in L^4(\R^3)$ can be replaced by any subcritical condition $u_0\in L^{3+}(\R^3)$.

The mild criticality breaking of Theorem \ref{theo.main} is also in contrast with \emph{strong} criticality breaking, which is genuinely supercritical and not a consequence of the critical theory. Only very few results enter that category for the Navier-Stokes equations. In this vein, let us mention the regularity criteria of Pan \cite{pan2016regularity} for slightly supercritical axisymmetric solutions to Navier-Stokes.

Let us now state a consequence of Theorem \ref{theo.main}.
\begin{theorem}\label{cor.orliczblowup}
There exists a universal constant $\theta\in(0,1)$ such that the following holds. 

Let $u$ be a weak Leray-Hopf 
solution to the Navier-Stokes equations \eqref{e.nse} on $\R^3\times(0,\infty)$ with initial data $u_0\in L^2(\R^3)\cap L^4(\R^3)$. Assume that $u$ first blows-up at $T^*>0$, namely
\begin{equation*}
u\in L^{\infty}_{loc}(0,T^*; L^{\infty}(\mathbb{R}^3))\,\,\,\textrm{and}\,\,\,u\notin L^{\infty}((\tfrac{1}{2}T^*, T^*);L^{\infty}(\mathbb{R}^3)).
\end{equation*}
Then the above assumptions imply that 
\begin{equation}\label{e.orliczblowup}
\lim\sup_{t\uparrow T^*}\int\limits_{\R^3}\frac{|u(x,t)|^3}{\Big(\log\log\log\big((\log(e^{e^{3e^{e}}}+|u(x,t)|))^\frac13\big)\Big)^\theta}dx=\infty.
\end{equation}
\end{theorem}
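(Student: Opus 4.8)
The strategy is to argue by contradiction, deducing from the negation of \eqref{e.orliczblowup} a slightly supercritical bound of the exact form required by Theorem \ref{theo.main}, and then invoking that theorem to contradict the assumption that $u$ blows up at $T^*$. So suppose the $\limsup$ in \eqref{e.orliczblowup} is finite, say bounded by some $A<\infty$; then for every $t$ sufficiently close to $T^*$ we have a bound on the Orlicz-type integral
\begin{equation*}
\int_{\R^3}\frac{|u(x,t)|^3}{\Big(\log\log\log\big((\log(e^{e^{3e^{e}}}+|u(x,t)|))^{\frac13}\big)\Big)^\theta}\,dx\leq A+1.
\end{equation*}
The weak Leray–Hopf solution is smooth on $(0,T^*)$ by hypothesis (it lies in $L^\infty_{loc}(0,T^*;L^\infty)$), and on any earlier time interval it is bounded in every $L^q$; so the only issue is the behaviour as $t\uparrow T^*$. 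I would therefore fix a time $t_0\in(\tfrac12 T^*,T^*)$ past which the Orlicz bound holds, use smoothness on $[\tfrac12 T^*,t_0]$ to control $\|u\|_{L^2\cap L^4}$ there, and restart the solution from $t_0$ (by time-translation invariance) with initial data $u(\cdot,t_0)\in L^2\cap L^4$ of some size $M$.

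The heart of the argument is the elementary inequality that converts the Orlicz bound into an $L^{3-\delta}$ bound. The point is that $L^{3-\delta}$ sits between $L^2$ (controlled by the energy inequality) and the Orlicz space appearing above; interpolating, or rather splitting the integral $\int |u|^{3-\delta}\,dx$ over the regions $\{|u|\leq R\}$ and $\{|u|> R\}$, one bounds the small-values part by $R^{1-\delta}\|u\|_{L^2}^2$ and the large-values part by the Orlicz integral times $\sup_{s\geq R}\big(s^{-\delta}(\log\log\log((\log(e^{e^{3e^e}}+s))^{1/3}))^\theta\big)$. The supremum is finite for every $\delta>0$ because the logarithmic factor grows more slowly than any positive power of $s$; what matters quantitatively is that, choosing $R$ optimally, this yields a bound of the form
\begin{equation*}
\|u(\cdot,t)\|_{L^{3-\delta}(\R^3)}\leq C\big(A+1+\|u_0\|_{L^2}^2\big)\cdot F(\delta)
\end{equation*}
with an \emph{explicit} function $F(\delta)$ that blows up only like a fixed power of $(\log\log\log\log(1/\delta))^{\theta/\delta}$-type triple/quadruple-exponential expression as $\delta\to0$ — precisely the kind of slow divergence that the triple-logarithm in \eqref{e.orliczblowup} is engineered to produce. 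Call the resulting bound $E(\delta)$.

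Now I would match this against the explicit formula \eqref{e.defdelta} for $\delta(M,E)$ from Theorem \ref{theo.main}. Since $\delta(M,E)\to0$ as $E\to\infty$ with $M$ fixed, and the rate at which $\delta(M,E)\to0$ is itself explicit (an iterated-logarithm rate, coming from Tao's quantitative critical regularity estimates which are of triple-exponential strength), the universal constant $\theta\in(0,1)$ should be chosen small enough, and the nested logarithms $\log\log\log$ and the constant $e^{e^{3e^e}}$ calibrated, so that the self-consistent pair $(M,E(\delta),\delta)$ satisfies $\delta\leq\delta(M,E(\delta))$ for all $\delta$ small. Concretely: for $t$ near $T^*$ we get $\|u\|_{L^\infty(t_0,\infty;L^{3-\delta})}\leq E(\delta)$, and the calibration guarantees we may pick $\delta$ with $\delta\leq\delta(M,E(\delta))$; Theorem \ref{theo.main} (applied to the solution restarted at $t_0$) then forces $u$ to be smooth on $\R^3\times(t_0,\infty)$, contradicting $u\notin L^\infty((\tfrac12 T^*,T^*);L^\infty)$.

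The main obstacle is the bookkeeping in this last calibration step: one must verify that the \emph{composition} of (i) the Orlicz-to-$L^{3-\delta}$ conversion rate $F(\delta)$ and (ii) the admissible-$\delta$ rate from \eqref{e.defdelta} closes, i.e. that the triple logarithm with exponent $\theta$ is weak enough that $E(\delta)$ grows slower (as $\delta\to0$) than the inverse of $\delta\mapsto\delta(M,\cdot)$. This is where the specific tower $e^{e^{3e^e}}$ and the triple $\log$ come from, and getting the constants to line up — including checking $\theta<1$ suffices and that $M$ (fixed by the solution on $[\tfrac12T^*,t_0]$) does not interfere — is the delicate part; everything else is either the standard energy inequality, time-translation of suitable weak solutions, or the black-box application of Theorem \ref{theo.main}.
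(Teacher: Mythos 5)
Your high-level strategy (argue by contradiction, convert the assumed Orlicz bound into an $L^{3-\delta}$ bound, then invoke Theorem~\ref{theo.main} / Remark~\ref{smallerdelta}) is the same as the paper's. However, the crucial quantitative step — the conversion from the Orlicz bound to an $L^{3-\delta}$ bound — is done too crudely, and as written the calibration at the end cannot close.

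The problem is in your estimate for the small-values region. You bound $\int_{\{|u|\le R\}}|u|^{3-\delta}\,dx\le R^{1-\delta}\|u\|_{L^2}^2$. Since the sup $\sup_{s\ge R}s^{-\delta}(\log\log\log(\cdots))^\theta$ is decreasing on $[R,\infty)$ only once $R$ exceeds a threshold $R(\delta)$ of order roughly $e^{c/\delta}$, you are forced to take $R\ge R(\delta)\sim e^{c/\delta}$, and the resulting bound is
$E(\delta)\gtrsim R(\delta)^{1-\delta}\sim e^{c/\delta}$ — exponential in $1/\delta$. But Remark~\ref{smallerdelta} requires $\delta\le 1/\exp\exp\exp(E^{2c})$, which with $E\sim e^{c/\delta}$ gives the absurd condition $\delta\le 1/\exp\exp\exp\exp(c/\delta)$. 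This is false for every small $\delta$, so the argument does not close; even your own estimate of $F(\delta)$ as a $(\log\log\log\log(1/\delta))^{\theta/\delta}$-type quantity is already fatally too large.

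What the paper does instead (and this is the missing idea) is to use the Orlicz bound itself — not the energy inequality — to control the small-values region. On $\{|u|\le\lambda\}$ the triple-log denominator is at most $(\log\log\log((\log(e^{e^{3e^e}}+\lambda))^{1/3}))^\theta$, so $\int_{\{|u|\le\lambda\}}|u|^3\,dx\le L\,(\log\log\log(\cdots))^\theta$, which grows only triple-logarithmically in $\lambda$. Interpolating this $L^3$ control with the $L^2$ energy bound gives $\|u_{\le\lambda}\|_{L^{3-\mu}}^{3-\mu}\le M^{2\mu}\big(L(\log\log\log(\cdots))^\theta\big)^{1-\mu}$, and the large-values region is handled by the monotonicity of $y\mapsto y^\mu/\log y$. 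With $\mu=1/\sqrt{\log\lambda}$ and $\theta=1/(2c)$, one gets $E\sim(\log\log\log\lambda)^{1/(2c)}$, i.e.\ $E$ grows like a triple-log of $1/\mu$, which is precisely slow enough to satisfy $\mu\le 1/\exp\exp\exp(E^{2c})$ (this reduces to $\sqrt{\log\lambda}\ge(\log\lambda)^{1/3}$). Replacing your $R^{1-\delta}\|u\|_{L^2}^2$ bound with this Orlicz-interpolation bound for the small-values region is the essential fix; the rest of your outline (restarting at a time $t_0$ close to $T^*$, using local-in-time smoothness, and the black-box application of Theorem~\ref{theo.main} via Remark~\ref{theo.mainlocalintime}) is sound.
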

Recently, in \cite{Tao19} (Remark 1.6), Tao conjectured that if a solution first loses smoothness at time $T^*>0$, then the Orlicz norm $\|u(t)\|_{L^{3}(\log\log\log L)^{-c}(\mathbb{R}^3)} $ must blow-up as $t$ tends to $T^*$. Theorem \ref{cor.orliczblowup} provides a positive answer to Tao's conjecture, albeit with an extra logarithm in the denominator. The proof of Theorem \ref{cor.orliczblowup} is a consequence of Theorem \ref{theo.main}. Its proof in Section \ref{sec.orlicz} relies on a careful tuning of the parameters.

Previously, it was shown in \cite{CV07} 
that if $u$ is a weak Leray-Hopf solution satisfying $$\int\limits_{0}^{\infty}\int\limits_{\mathbb{R}^3}\frac{|u|^5}{\log(1+|u|)}dxdt<\infty $$ then $u$ is smooth on $\mathbb{R}^3\times (0,\infty)$. Subsequent improvements were obtained in \cite{LZ13} 
and \cite{BV11}. Let us mention that the techniques used in these papers cannot be used to treat the \textit{endpoint} case we consider in Theorem \ref{cor.orliczblowup}.

\noindent{\bf Alternative (qualitative) proof of Theorem \ref{theo.main}.} A proof of Theorem \ref{theo.main} relying on quantitative arguments in given in Section \ref{sec.mildquant}. In order to clarify the role played by the hypothesis in Theorem \ref{theo.main}, we find it instructive to finish this introduction by sketching a \emph{qualitative argument} that yields Theorem \ref{theo.main}. Nevertheless, this argument does not give an explicit quantitative bound for $\delta(M,E)$, which is produced by the quantitative argument given in Section \ref{sec.mildquant} below. The argument is by contradiction and uses the persistence of singularities for critically bounded solutions \cite{rusin2011minimal}. Such a strategy is known to enable to non explicitly quantify regularity criteria, such as the celebrated Escauriaza, Seregin and \v Sver\'ak \cite{ESS2003} result; see \cite{rusin2011minimal} as well as \cite{albritton2018local} 
and \cite[Introduction]{BP20}.

We argue by contradiction. There exists a sequence of suitable weak Leray-Hopf solutions $u^{(k)}$ on $\R^3\times (0,\infty)$, initial data $u_0^{(k)}$ and $\delta^{(k)}\rightarrow 0$ such that
\begin{align*}
&\|u_0^{(k)}\|_{L^2},\quad\|u_0^{(k)}\|_{L^4}\leq M,\\
&\|u^{(k)}\|_{L^\infty_t(0,\infty;L^{3-\delta^{(k)}}(\R^3))}\leq E,\\
&u^{(k)}\quad\mbox{has a singular point}\quad (x^{(k)},t^{(k)})\in\R^3\times (0,\infty).
\end{align*}
Note that we say that $(x,t)$ is a singular point of  a Navier-Stokes solution $u$, if for all $r>0$ sufficiently small $u\notin L^{\infty}(B(x,r)\times (t-r^2,t))$.

\noindent{\underline{First step: trapping the singularity.}} By the theory of mild solutions with subcritical $L^4$ data \cite{Giga86} on the one hand, and eventual regularity \cite[Theorem 1.2]{BT19} on the other hand, there exists a universal constant $C\in(0,\infty)$ such that 
\begin{equation*}
\frac1{CM^{8}}\leq t^{(k)}\leq CM^4\quad\mbox{for all}\quad k\in\N.
\end{equation*}
By translation in space we can assume that $x^{(k)}=0$.

\noindent{\underline{Second step: persistence of singularities.}} Up to a subsequence $(0,t^{(k)})\rightarrow \bar t$ with $C^{-1}M^{-8}\leq\bar t\leq CM^4$. Moreover, the global energy of $u^{(k)}$ is uniformly bounded in $k$. By \cite{Ser12}, $u^{(k)}$ converges strongly in $L^3_{loc,t,x}$ (up to a subsequence) to a suitable weak Leray-Hopf solution $\bar v$ belonging to the energy space. By persistence of singularities \cite[Proposition 2.2]{rusin2011minimal}, $\bar v$ has a singularity at $(0,\bar t)$.

\noindent{\underline{Third step: $\bar v\in L^\infty_tL^3_x$.}} Since $\bar v\in L^\infty(0,\infty;L^2(\R^3))$ and $\nabla\bar v\in L^2(0,\infty;L^2(\R^3))$, we have $\bar v\in L^4(0,\infty;L^3(\R^3))$. Hence $\bar v(\cdot,t)\in L^3(\R^3)$ for almost every $t\in(0,\infty)$. Now for any fixed $\delta\in(0,\frac12)$, $0<\delta^{(k)}\leq\delta$ for $k$ large enough, and hence we can interpolate the $L^\infty(0,\infty;L^{3-\delta}(\R^3)))$ norm between the $L^\infty(0,\infty;L^2(\R^3))$ and $L^\infty(0,\infty;L^{3-\delta^{(k)}}(\R^3))$ norms. This yields
\begin{equation*}
\|u^{(k)}\|_{L^\infty(0,\infty;L^{3-\delta}(\R^3))}\leq  E^\frac{\frac{1}{3-\delta}-\frac12}{\frac{1}{3-\delta^{(k)}}-\frac12}M^{\frac{\frac{1}{3-\delta^{(k)}}-\frac{1}{3-\delta}}{\frac{1}{3-\delta^{(k)}}-\frac12}}\leq \max(E,M).
\end{equation*}
Now by Fatou's lemma, for almost every $t\in(0,\infty)$,
\begin{equation*}
\int\limits_{\R^3}|\bar v(x,t)|^3\, dx\leq\liminf_{\delta\rightarrow 0}\int\limits_{\R^3}|\bar v(x,t)|^{3-\delta}\, dx\leq\max(E,M).
\end{equation*}
Hence, $\bar v\in L^\infty(0,\infty;L^3(\R^3))$. From \cite{ESS2003}, this implies that $\bar{v}$ is smooth in $\mathbb{R}^3\times (0,\infty)$, which contradicts the fact that $\bar v$ has a singularity (see the second step above).

\section{Quantitative proof of regularity under boundedness of mild supercritical quantities}
\label{sec.mildquant}

The proof of Theorem \ref{theo.main} relies on three main steps:
\begin{enumerate}[label=(Step \arabic*)]
\item quantitative control of the critical $L^5_{t,x}$ norm, see Proposion \ref{prop.step1} below;
\item $L^p$ energy estimates for critically bounded solutions to the Navier-Stokes equations, see Proposition \ref{prop.step2} below;
\item transfer of subcritical information from the initial data to large time via slicing in time.
\end{enumerate}

\noindent{\bf Step 1: quantitative control of the critical $L^5_{t,x}$ norm.}

\begin{proposition}\label{prop.step1} 
For $M$ and $\bar E$ sufficiently large (see Footnote \ref{foot.one}), there exists $C(M)\in(0,\infty)$ such that the following holds. Let $T\in(0,\infty)$. Let $u$ be a suitable weak Leray-Hopf 
to the Navier-Stokes equations \eqref{e.nse} on $\R^3\times(0,T)$ with initial data $u_0\in L^2(\R^3)\cap L^4(\R^3)$.\\
Assume that 
\begin{equation*}
\|u_0\|_{L^2},\ \|u_0\|_{L^4}\leq M,
\end{equation*}
and that
\begin{equation*}
\|u\|_{L^\infty(0,T;L^{3}(\R^3))}\leq \bar E.
\end{equation*}
Then, we have the following quantitative estimate
\begin{equation}\label{e.estL5tx}
\|u\|_{L^5(0,T;L^5(\R^3))}\leq C(M)\exp\exp\exp\big(C_{univ}{\bar E}^c\big).
\end{equation}
Here, $c\in[1,\infty)$ is a universal constant and $C(M)$ can be taken to be $C_{univ}(\log(M))^\frac15$.
\end{proposition}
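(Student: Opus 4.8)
The plan is to combine Tao's quantitative regularity estimate for solutions in $L^\infty_t L^3_x$ with the subcritical smoothing coming from the $L^4$ initial data, and then to patch these two pieces of information together on a time interval split into a short initial layer and the remaining (critically controlled) part. First I would invoke the local-in-time theory of mild solutions with subcritical $L^4$ data \cite{Giga86}: there is a universal time $T_0 = T_0(M) \gtrsim M^{-\alpha}$ (for some universal $\alpha$) such that on $(0,T_0)$ the solution $u$ coincides with a smooth mild solution and enjoys quantitative bounds, in particular a bound on $\|u\|_{L^5((0,T_0)\times\R^3)}$ depending only polynomially on $M$. This handles the ``initial layer'' and, crucially, provides a time $t_0 \in (0,T_0)$ at which $\|u(\cdot,t_0)\|_{L^4}$ and $\|u(\cdot,t_0)\|_{L^2}$ are controlled by a power of $M$, which will serve as a good ``restart'' datum.

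Next I would feed the hypothesis $\|u\|_{L^\infty(0,T;L^3(\R^3))} \le \bar E$ into the quantitative regularity machinery of Tao \cite{Tao19} (or the authors' refinement \cite{BP20}). The output is a quantitative bound on higher derivatives of $u$ on the time slab $(t_0, T)$ — or equivalently a triple-exponential-in-$\bar E^c$ bound on an $L^\infty_{t,x}$ or smoothing-type norm — with the short time $t_0$ entering only through negative powers. From such bounds, together with the energy inequality \eqref{weakLeraydef} which gives $\|u\|_{L^2_t L^6_x} \lesssim M$ unconditionally, one obtains $\|u\|_{L^5((t_0,T)\times\R^3)} \le C(M)\exp\exp\exp(C_{univ}\bar E^c)$ by interpolating the quantitative critical bound against the energy. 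Concatenating with the initial-layer estimate yields the claimed bound on $(0,T)$.

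The main obstacle I expect is the bookkeeping needed to make the constant $C(M)$ have the precise form $C_{univ}(\log M)^{1/5}$ rather than a cruder polynomial or exponential in $M$: this is delicate because the naive local theory produces powers of $M$, and one must instead exploit that $M$ enters only through the \emph{length} of the initial layer (which can be absorbed logarithmically after the $L^5$ norm is reconstituted by a Riemann-sum / slicing argument over unit-length-in-rescaled-time subintervals) while the genuinely large dependence is carried entirely by $\bar E$ through Tao's estimate. A secondary technical point is ensuring that all the invoked quantitative statements are genuinely uniform in $T$ and do not degenerate as $T \to \infty$; this follows because the $L^\infty_t L^3_x$ hypothesis is scale-invariant and global, so Tao's bound applies on each unit time window with the same constant, and summing the fifth powers over $\lceil T \rceil$ windows costs only a harmless factor that is reabsorbed into the triple exponential. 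I would also need the assumption that $M$ and $\bar E$ are sufficiently large (Footnote \ref{foot.one}) precisely to guarantee that the implied constants in the local existence time and in Tao's estimate are in their ``stable regime''.
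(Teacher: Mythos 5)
Your proposal correctly identifies two of the three ingredients the paper uses (the subcritical $L^4$ short-time mild-solution theory, and Tao's triple-exponential quantitative bound under the $L^\infty_t L^3_x$ hypothesis), and the interpolation-against-energy step is in the right spirit. However, there is a genuine gap: you have no mechanism to make the bound \emph{uniform in $T$}. Tao's estimate, applied from a restart time $t_0$, gives $\|u(\cdot,t)\|_{L^\infty} \lesssim (t-t_0)^{-1/2}\exp\exp\exp(C_{univ}\bar E^c)$; interpolating this against $\|u(\cdot,t)\|_{L^3}\le\bar E$ yields $\|u(\cdot,t)\|_{L^5}\lesssim (t-t_0)^{-1/5}\exp\exp\exp(C_{univ}\bar E^c)$, whose fifth power integrates to $\log(T-t_0)$. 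Your alternative of summing over $\lceil T\rceil$ unit-time windows is no better: each window contributes a comparable amount, and the total scales like $T^{1/5}$. In both variants the result degenerates as $T\to\infty$, and the claim that this factor ``costs only a harmless factor that is reabsorbed into the triple exponential'' is simply false because $T$ is not controlled by $M$ or $\bar E$.

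The ingredient you are missing is the \emph{eventual regularity} result \cite[Theorem 1.2]{BT19}, which gives $\|u\|_{L^\infty_t(C_{univ}M^4,\infty;L^\infty_x)}\le C_{univ}M^{-2}$ unconditionally for suitable weak Leray--Hopf solutions. The paper uses this to split time into three regimes rather than your two: an initial layer $(0,c_4\|u_0\|_{L^4}^{-8})$ handled by mild-solution theory; an intermediate window $(c_4\|u_0\|_{L^4}^{-8},C_{univ}M^4)$ where Tao's $t^{-1/2}$ bound is integrated (producing exactly the $\log M$ since the window's endpoints are $\sim M^{-8}$ and $\sim M^4$); and a tail $(C_{univ}M^4,\infty)$ where eventual regularity combined with the energy bound $\|u\|_{L^{10/3}_{t,x}}\lesssim M$ yields a universal $L^5_{t,x}$ bound. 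Only this third regime gives the $T$-uniformity, and it also makes the origin of the constant $C_{univ}(\log M)^{1/5}$ transparent, so your speculative ``Riemann-sum over unit-length-in-rescaled-time subintervals'' bookkeeping is unnecessary once the correct three-regime decomposition is in place.
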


The point in this proposition is that we control quantitatively the critical $L^5_{t,x}$ norm. This enables, see (Step 3) below, to slice the time interval into disjoint epochs where the $L^5_{t,x}$ norm is small. Moreover, we remark that the constant in \eqref{e.estL5tx} is independent of time. 
Notice that by \cite{ESS2003} a suitable weak Leray-Hopf solution to \eqref{e.nse} for which 
$$\|u\|_{L^\infty(0,T;L^{3}(\R^3))}<\infty$$
 is a classical solution on $\R^3\times(0,T]$. Hence the quantitative regularity result of Tao \cite{Tao19} can be used off-the-shelf.

\begin{proof}[Proof of Proposition \ref{prop.step1}]
We combine: (i) short-time estimates coming for the mild solution theory \cite{Giga86}, with (ii) eventual regularity for suitable weak Leray-Hopf solutions in three dimensions \cite[Theorem 1.2]{BT19} and (iii) quantitative regularity estimates away from initial time. First, since the initial data $u_0\in L^4(\R^3)$, there exists a universal constant \begin{align}
\begin{split}\label{e.estshorttime}
\|u\|_{L^5(0,c_4\|u_0\|_{L^4}^{-8};L^5(\R^3))}\leq\ & c_4^{1/8}\|u_0\|_{L^4}^{-1}\|u\|_{L^\frac{40}3(0,c_4\|u_0\|_{L^4}^{-8};L^5(\R^3))}\\
\leq\ & C_{univ}
\end{split}
\end{align}
Second, by eventual regularity for suitable weak Leray-Hopf solutions \cite[Theorem 1.2]{BT19} we have
\begin{equation}\label{eventualreg}
\|u\|_{L^{\infty}_t(C_{univ}M^4,\infty; L^{\infty}_{x}(\mathbb{R}^3))}\leq C_{univ}M^{-2}.
\end{equation}
Moreover, interpolating \eqref{eventualreg} with  
$$\|u\|_{L^\frac{10}3(\R^3\times(0,T))}\leq C_{univ}\|u_0\|_{L^2},$$
coming from the finiteness of the global energy, we obtain
\begin{equation}\label{e.L5largetime}
\|u\|_{L^5(\R^3\times(C_{univ}M^4,\infty))}\leq (C_{univ}\|u_0\|_{L^2})^\frac23(C_{univ}M^{-2})^{\frac{1}{3}}\leq C_{univ}.
\end{equation}
Third, we consider the case of intermediate times $S\in(c_4\|u_0\|_{L^4}^{-8},C_{univ}M^4)$. By the quantitative regularity from \cite[Theorem 1.2]{Tao19} 
under the boundedness of the critical $L^\infty(0,S;L^3(\R^3))$ norm, we obtain for all $t\in(0,S)$, 
\begin{equation}\label{e.tao}
\|u(\cdot,t)\|_{L^{\infty}(\mathbb{R}^3)}\leq t^{-\frac12}\exp\exp\exp\big(C_{univ}\bar E^c\big)
\end{equation}
where $c\in(0,\infty)$ is a universal constant. Interpolating with $\|u(\cdot,t)\|_{L^{3}(\mathbb{R}^3)}$ yields that, for $\|u\|_{L^\infty(0,S;L^3(\R^3))}$ sufficiently large, we have
\begin{equation}\label{L5log}
\|u(\cdot,t)\|_{L^{5}(\mathbb{R}^3)}\leq t^{-\frac15}\exp\exp\exp\big(C_{univ}\bar E^c\big)
\end{equation}
Thus integrating over $(c_4\|u_0\|_{L^4}^{-8},S)$ and using that $S\leq C_{univ}M^4$ gives
\begin{equation}\label{L5intermediateest}
\|u\|_{L^{5}(c_4\|u_0\|_{L^4}^{-8},S; L^{5}(\mathbb{R}^3))}^5\leq C_{univ}\log(M)\exp\exp\exp\big(C_{univ}\bar E^c\big)
\end{equation} 
Finally, combining the short-time estimate \eqref{e.estshorttime} with the large-time estimate \eqref{e.L5largetime} and the intermediate-time estimate \eqref{L5intermediateest} yields the desired conclusion. \endproof
\end{proof}

\noindent{\bf Step 2: $L^p$ energy estimates for critically bounded solutions.} For $t_1\in\R$, $S\in(0,\infty)$ and $p\in [3,\infty)$, we define the $L^p$ energy as follows
\begin{multline}\label{e.Lpener}
\mathcal E_{p,t_1}(S):=\sup_{t_1\leq s\leq t_1+S}\int\limits_{\R^3}|u|^p(x,s)\, dx\\
+p\int\limits_{t_1}^{t_1+S}\int\limits_{\R^3}|\nabla u|^2|u|^{p-2}\, dxds+\frac{4(p-2)}{p}\int\limits_{t_1}^{t_1+S}\int\limits_{\R^3}|\nabla|u|^\frac p2|^2\, dxds.
\end{multline}
The following proposition provides estimates for this $L^p$ energy independently of $T$ under the assumption that $u$ is critically bounded and sufficiently smooth.

\begin{proposition}\label{prop.step2}
Let $t_1\in\R$, $S\in(0,\infty)$ and $p\in [4,\infty)$. There exists $C_p\in(0,\infty)$ such that the following holds. Let $u$ be a smooth bounded weak Leray-Hopf 
to the Navier-Stokes equations \eqref{e.nse} on $\R^3\times [t_1,t_1+S]$ with initial data $u(\cdot,t_1)\in L^2(\R^3)\cap L^p(\R^3)$.\\
Then, we have the following quantitative estimate
\begin{equation}\label{e.Lpenerest}
\mathcal E_{p,t_1}(S)\leq \|u(\cdot,t_1)\|_{L^p(\R^3)}^p+C_p\|u\|_{L^5(\R^3\times(t_1,t_1+S))}\mathcal E_{p,t_1}(S),
\end{equation}
where $\mathcal E_{p,t_1}(S)$ is defined by \eqref{e.Lpener}. Moreover, $C_p\stackrel{\infty}{\sim} p^\frac{3}{2}$.
\end{proposition}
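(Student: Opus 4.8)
\textbf{Proof plan for Proposition \ref{prop.step2}.}

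The plan is to obtain \eqref{e.Lpenerest} by testing the Navier-Stokes equations against $|u|^{p-2}u$ and carefully tracking the resulting identity, then estimating the nonlinear term using the critical $L^5_{t,x}$ norm. Since $u$ is assumed smooth and bounded on $\R^3\times[t_1,t_1+S]$ with $u(\cdot,t_1)\in L^2\cap L^p$, this testing procedure is fully rigorous: multiply the momentum equation by $|u|^{p-2}u$ and integrate over $\R^3\times(t_1,s)$ for $t_1\leq s\leq t_1+S$. The time derivative and dissipation terms produce, after the standard computations, precisely the left-hand side $\mathcal E_{p,t_1}(S)$ of \eqref{e.Lpener}: the term $\partial_t u\cdot|u|^{p-2}u$ integrates in time to $\frac1p(\|u(\cdot,s)\|_{L^p}^p-\|u(\cdot,t_1)\|_{L^p}^p)$, while $-\Delta u\cdot|u|^{p-2}u$ integrates by parts to give both $\int|\nabla u|^2|u|^{p-2}$ and, after the algebraic identity $|\nabla|u|^{p/2}|^2=\frac{p^2}{4}|u|^{p-2}|\nabla|u||^2$ together with $|\nabla|u||\leq|\nabla u|$, the term proportional to $\int|\nabla|u|^{p/2}|^2$ with the constant $\frac{4(p-2)}{p}$ as written (this is the classical $L^p$ maximal-regularity-type identity; the coefficient comes from splitting $\nabla(|u|^{p-2}u)$ and using $p-2$ of the radial part). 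The pressure term vanishes after integration by parts since $\nabla\cdot u=0$ and $\nabla\cdot(|u|^{p-2}u)$ pairs against $p$ but more directly $\int\nabla p\cdot|u|^{p-2}u=-\int p\,\nabla\cdot(|u|^{p-2}u)$ — here one must be slightly careful, but the transport term handles this.

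Next I would treat the nonlinear term $\int_{t_1}^{s}\int_{\R^3}(u\cdot\nabla u)\cdot|u|^{p-2}u\,dxds'$. Integrating by parts and using $\nabla\cdot u=0$, this equals $-\frac1p\int_{t_1}^{s}\int_{\R^3}(u\cdot\nabla)|u|^p\,dx\,ds'+\ldots$ — actually $\int(u\cdot\nabla u)\cdot|u|^{p-2}u=\frac1p\int u\cdot\nabla(|u|^p)=0$ only if there were no pressure gradient; but once the pressure is included, the correct statement is that the combined pressure-plus-transport contribution is bounded by a constant times $\int|\nabla u|\,|u|^{p-1}\,|\text{(Riesz-type operator applied to }|u|^2)|$-type terms, or more efficiently one estimates the full nonlinear contribution directly. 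The cleanest route: bound the sum of the transport and pressure terms by $C_p\int_{t_1}^{t_1+S}\int_{\R^3}|u|^3|u|^{p-2}\,dx\,ds'$ after using the pressure representation $p=R_iR_j(u_iu_j)$ and Calder\'on-Zygmund bounds, then apply H\"older in space to split $\int|u|^{p+1}=\int|u|\cdot(|u|^{p/2})^2$ and interpolate; concretely, $\|u\|_{L^5(\R^3\times(t_1,t_1+S))}$ controls the top-order term via H\"older in space-time combined with the Sobolev embedding $\|\,|u|^{p/2}\|_{L^6}\lesssim\|\nabla|u|^{p/2}\|_{L^2}$ and the bound $\sup_s\||u|^{p/2}\|_{L^2}^2=\sup_s\|u\|_{L^p}^p$, both of which are controlled by $\mathcal E_{p,t_1}(S)$. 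The exponent bookkeeping (five in the $u$-norm, the $6$ and $2$ in the Sobolev step) is exactly what forces the power $\|u\|_{L^5}$ to appear to the first power with the remaining factor being $\mathcal E_{p,t_1}(S)$ itself, giving the absorbed-on-the-right structure of \eqref{e.Lpenerest}.

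The main obstacle I anticipate is tracking the $p$-dependence of $C_p$ to get the claimed asymptotics $C_p\stackrel{\infty}{\sim}p^{3/2}$. Each integration by parts in the nonlinear and pressure terms produces factors of $p$ (from differentiating $|u|^{p-2}u$, which yields $(p-1)|u|^{p-2}\nabla|u|$ plus lower-order pieces), and the splitting into the gradient term $\int|\nabla|u|^{p/2}|^2$ costs a factor $\sim p^{-2}$ when one divides by the coefficient $\frac{4(p-2)}{p}\sim\frac4p$ appearing on the left; combined with a $p$ from the H\"older/interpolation step and the Calder\'on-Zygmund constant for the pressure (which is $p$-independent here since we fix the singular integral), the net growth should come out to $p^{3/2}$ once the interpolation exponents are optimized — but getting the exact power rather than, say, $p^2$ requires using the full three-term structure of $\mathcal E_{p,t_1}$ rather than just the dissipation term, so the careful choice of how to distribute the $|u|^{p/2}$ between the $L^6$ Sobolev factor and the $L^2$ energy factor is the delicate point. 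Everything else is routine once the testing is justified, which it is by the smoothness hypothesis.
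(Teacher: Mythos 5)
The overall strategy is right (test with $|u|^{p-2}u$, invoke $p=\Riesz_i\Riesz_j(u_iu_j)$ and Calder\'on--Zygmund, close via the critical $L^5_{t,x}$ norm), but the treatment of the pressure term contains a genuine error. You first claim the pressure term vanishes after integrating by parts; it does not. One has $\int\nabla p\cdot|u|^{p-2}u\,dx=-\int p\,\nabla\cdot(|u|^{p-2}u)\,dx$, and since $\nabla\cdot u=0$ only kills the $|u|^{p-2}(\nabla\cdot u)$ piece, what remains is $-(p-2)\int p\,|u|^{p-4}u_ku_l\partial_ku_l\,dx\ne 0$. The hedge ``the transport term handles this'' gets the roles reversed: it is the transport term $\int(u\cdot\nabla u)\cdot|u|^{p-2}u\,dx=\frac1p\int u\cdot\nabla|u|^p\,dx$ that vanishes identically by incompressibility (independently of the pressure), while the pressure term is exactly what must be estimated and it necessarily carries a factor of $|\nabla u|$. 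The paper retains this structure: $I_{press}=(p-2)\int\int \Riesz_i\Riesz_j(u_iu_j)\,|u|^{p-4}u_ku_l\partial_ku_l$, which is bounded by $(p-2)\int\int|\Riesz_i\Riesz_j(u_iu_j)|\,|u|^{p-2}|\nabla u|$; there is no way to trade the $|\nabla u|$ for a $|u|$ here.

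This matters for the second half of your argument. You propose to reduce everything to $C_p\int\int|u|^{p+1}$ and then bound $\int\int|u|\,(|u|^{p/2})^2\le\|u\|_{L^5_{t,x}}\,\||u|^{p/2}\|_{L^{5/2}_{t,x}}^2$ by interpolation. But $\||u|^{p/2}\|_{L^{5/2}_{t,x}}$ is \emph{not} controlled by $\mathcal E_{p,t_1}(S)$: the $L^p$ energy controls $\||u|^{p/2}\|_{L^q_tL^r_x}$ only on the line $\frac2q+\frac3r=\frac32$ (and $r\in[2,6]$) via $\sup_t\||u|^{p/2}\|_{L^2}$ and $\int\|\nabla|u|^{p/2}\|_{L^2}^2$, whereas $(q,r)=(\frac52,\frac52)$ gives $\frac2q+\frac3r=2>\frac32$, so the estimate fails without introducing a power of the time length $S$ — which would destroy the crucial $T$-uniformity. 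The correct Hölder splitting (which the paper uses) keeps the dissipation factor explicit: factor the integrand of $I_{press}$ as $|\Riesz_i\Riesz_j(u_iu_j)|\cdot\bigl(|u|^{(p-2)/2}|\nabla u|\bigr)\cdot|u|^{(p-2)/2}$ and apply H\"older with exponents $\frac{5p}{p+3},\,2,\,\frac{10p}{3(p-2)}$; after Calder\'on--Zygmund and a further H\"older on $\||u|^2\|_{L^{5p/(p+3)}}\le\|u\|_{L^5}\||u|^{p/2}\|_{L^{10/3}}^{2/p}$, all three remaining factors ($\|u\|_{L^5}$, $(\int\int|\nabla u|^2|u|^{p-2})^{1/2}$, $\||u|^{p/2}\|_{L^{10/3}}$) sit inside $\|u\|_{L^5}\mathcal E_{p,t_1}(S)$, and the $L^{10/3}_{t,x}$ norm is exactly on the energy line $\frac2q+\frac3r=\frac32$. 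Your $p$-dependence heuristic ($p^{3/2}$) lands in the right place, but since it is built on the $\int|u|^{p+1}$ reduction it cannot be verified as written; you should redo that bookkeeping with the three-factor H\"older above.
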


This proposition will be used in the case $p=4$. We remark that if the critical $L^5(\R^3\times(t_1,t_1+T))$ is small enough, the $L^4$ estimate \eqref{e.Lpenerest} provides a control of $\mathcal E_{4,t_1}(T)$. This is the key idea behind the slicing argument in (Step 3) below.

\begin{proof}[Proof of Proposition \ref{prop.step2}]
We prove this result for the sake of completeness. The result may be well known, but we are not aware of an $L^p$ energy estimate in the form of \eqref{e.Lpenerest}; for a related estimate see for instance \cite[Theorem 0.1]{BDV87} and \cite[p. 45]{ESS2003}. Let us fix $p\geq 4$. 
 Estimate \eqref{e.Lpenerest} follows from testing the Navier-Stokes system \eqref{e.nse} with $u|u|^{p-2}$. This yields the following $L^p$ energy balance
\begin{align}
\label{e.enerbal}
&\frac{1}{p}\int\limits_{\R^3}|u(x,t)|^p\, dx+\int\limits_{t_1}^t\int\limits_{\R^3}|\nabla u|^2|u|^{p-2}\, dx+\frac{4(p-2)}{p^2}\int\limits_{t_1}^t\int\limits_{\R^3}|\nabla|u|^{\frac p2}|^2\, dx\\
&=\frac{1}{p}\int\limits_{\R^3}|u(x,t_1)|^p\, dx-\int\limits_{t_1}^t\int\limits_{\R^3}\nabla p\cdot|u|^{p-2}u\, dx+\int\limits_{t_1}^t\int\limits_{\R^3}(u\cdot\nabla u)\cdot|u|^{p-2}u\, dx\nonumber\\
&=\frac{1}{p}\int\limits_{\R^3}|u(x,t_1)|^p\, dx+I_{press}+I_{nl},\nonumber
\end{align}
for all $t\in[t_1,t_1+S]$. 
First, 
\begin{equation*}
I_{press}=(p-2)\int\limits_{t_1}^S\int\limits_{\R^3}R_{i}R_{j}(u_{i}u_{j})|u|^{p-4}u_{k}u_{l}\partial_{k}u_{l}\, dx,
\end{equation*}
where $\Riesz=(\Riesz_\alpha)_{\alpha=1,\ldots\, 3}$ is the Riesz transform. Here, we also utilized the Einstein summation convention. Using now H\"{o}lder's inequality and Calder\'on-Zygmund's theorem, we get the following estimate
\begin{align*}
|I_{press}|\leq\ & C(p-2)\||u|^2\|_{L^{\frac{5p}{p+3}}(\R^3\times(t_1,t))}\\
&\times\left(\int\limits_{t_1}^t\int\limits_{\R^3}|\nabla u|^2|u|^{p-2}\, dxds\right)^\frac12\||u|^\frac p2\|_{L^{\frac{10}3}(\R^3\times(t_1,t))}^\frac{p-2}{p}\\
\leq\ &C(p-2)\|u\|_{L^{5}(\R^3\times(t_1,t))}\||u|^\frac p2\|_{L^{\frac{10}3}(\R^3\times(t_1,t))}\left(\int\limits_{t_1}^t\int\limits_{\R^3}|\nabla u|^2|u|^{p-2}\, dxds\right)^\frac12
\end{align*}
where $C\in(0,\infty)$ is a universal constant. Notice that $C$ a priori depends on $p$ but can be chosen uniformly in $p\in[4,\infty)$. Indeed, 
it depends on the constant from Calder\'on-Zygmund's theorem for the $L^\frac{5p}{p+3}$ boundedness of the Riesz transforms, and $\frac{5p}{p+3}\rightarrow 5$ as $p\rightarrow\infty$. Second, by H\"{o}lder's inequality
\begin{align*}
|I_{nl}|\leq\ &\|u\|_{L^{5}(\R^3\times(t_1,t))}\||u|^\frac p2\|_{L^{\frac{10}3}(\R^3\times(t_1,t))}\left(\int\limits_{t_1}^t\int\limits_{\R^3}|\nabla u|^2|u|^{p-2}\, dxds\right)^\frac12.
\end{align*}
This implies \eqref{e.Lpenerest} by noticing that $\mathcal E_{p,t_1}(S)$ controls $\||u|^\frac p2\|_{L^{\frac{10}3}(\R^3\times(t_1,t))}$.
\end{proof} 

\noindent{\bf Step 3: transfer of subcritical information via slicing.} Let $M,\, E\in[1,\infty)$ be sufficiently large. Our goal is to prove that there exists $\delta(M,E)\in(0,\frac12]$ and $K(M,E)\in[1,\infty)$ such that for all $u_0\in L^2(\R^3)\cap L^4(\R^3)$ and any suitable weak Leray-Hopf solution associated to the initial data $u_0$, if 
\begin{equation*}
\|u_0\|_{L^2},\ \|u_0\|_{L^4}\leq M,
\end{equation*}
and
\begin{equation*}
\|u\|_{L^\infty(0,\infty;L^{3-\delta(M,E)}(\R^3))}\leq E,
\end{equation*}
then 
\begin{equation}\label{e.quantLinftyL4}
u\in L^{\infty}(0,T;L^{4}(\R^3))\Rightarrow\|u\|_{L^\infty(0,T;L^4(\R^3))}\leq K(M,E).
\end{equation}
This then obviously implies the result stated in Theorem \ref{theo.main}. The crucial point is that $K(M,E)$ is uniform in time. 
From the theory of mild solutions of the Navier-Stokes equations with subcritical initial data \cite{Giga86}, we have
$$\|u\|_{L^{\infty}(0, c_{4}\|u_0\|_{L^{4}}^{-8}; L^{4}(\mathbb{R}^3))}\leq 2\|u_{0}\|_{L^{4}}\leq 2M .$$ Hence it is sufficient to show that  
\begin{equation*}
{u\in L^{\infty}(0,T;L^{4}(\R^3))\Rightarrow}\|u\|_{L^\infty(c_{4}\|u_0\|_{L^{4}}^{-8},T;L^4(\R^3))}\leq K(M,E).
\end{equation*}
Note that $u$ is bounded and smooth on $\mathbb{R}^3\times [c_{4}\|u_0\|_{L^{4}}^{-8},T]$, hence all subsequent estimates can be justified rigorously.

The only a priori globally controlled quantity is a supercritical $L^\infty_tL^{3-}$ norm. We are not aware of any regularity mechanism enabling to brake the critically barrier based on the sole knowledge of such a supercritical bound. 
Therefore, the idea, following Bulut \cite{Bulut20} is to transfer the subcritical information coming from the initial data $u_0\in L^4(\R^3)$ to arbitrarily large times by using three ingredients:
\begin{enumerate}[label=(\arabic*)]
\item the control of the critical $L^\infty_tL^3_x$ norm via interpolation between the supercritical norm $L^\infty_tL^{3-\delta(M,E)}$ and the subcritical $L^\infty_tL^4_x$ norm;
\item the control of the critical $L^5_{t,x}$ norm from Proposition \ref{prop.step1}, which enables the slicing of the interval $(0,T)$ into a $T$-independent number of disjoint epochs $I_j=(t_j,t_j+S_j)$;
\item the $L^4$ energy estimate of Proposition \ref{prop.step2} which enables the transfer the subcritical information from time $t_j$ to $t_{j+1}$ and eventually to $T$.
\end{enumerate}
Let us carry out this three-step analysis, keeping the parameters $\delta:=\delta(M,E)\in(0,\frac12]$ and $K:=K(M,E)\in[1,\infty)$ free for the moment. These two parameters will be fixed at the end of the argument, see \eqref{e.defK} and \eqref{e.defdelta}. First by interpolation we obtain the following control
\begin{align}
\begin{split}\label{e.estLinftyL3}
\|u\|_{L^\infty(0,T;L^3(\R^3))}\leq\ &\|u\|_{L^\infty(0,T;L^{3-\delta}(\R^3))}^\frac{3-\delta}{3+3\delta}\|u\|_{L^\infty(0,T;L^4(\R^3))}^\frac{4\delta}{3+3\delta}\\
\leq\ &E^\frac{3-\delta}{3+3\delta}K^\frac{4\delta}{3+3\delta}.
\end{split}
\end{align}
Second, by the quantitative estimate \eqref{e.estL5tx} and the interpolation inequality \eqref{e.estLinftyL3}, we have
\begin{equation}\label{e.quantestinterpol}
\|u\|_{L^5(0,T;L^5(\R^3))}\leq C(M)\exp\exp\exp\big(C_{univ}\big(E^\frac{3-\delta}{3+3\delta}K^\frac{4\delta}{3+3\delta}\big)^c\big).
\end{equation}
We take $\ep:=\frac{1}{2C_4}$, where $C_4$ is the constant appearing in \eqref{e.Lpenerest} for $p=4$. We slice the interval $(c_{4}\|u_0\|_{L^{4}}^{-8},T)$ into $m+1$ disjoint successive intervals $I_j$ such that
\begin{equation}\label{e.cond1}
(c_{4}\|u_0\|_{L^{4}}^{-8},T)=\bigcup_{j\in\{1,\ldots\, m+1\}}I_j,\qquad I_j\cap I_{k}=\emptyset,\qquad I_j=(t_j,t_{j+1})=(t_j,t_j+S_j),
\end{equation}
where equality is up to a set of measure zero, and 
\begin{equation}\label{e.cond2}
\|u\|_{L^5(I_j;L^5(\R^3))}=\ep\quad\mbox{for all $1\leq j\leq m$ and}\quad \|u\|_{L^5(I_{m+1};L^5(\R^3))}\leq\ep.
\end{equation}
Notice that $t_1=c_{4}\|u_0\|_{L^{4}}^{-8}$ and $t_{m+2}=T$. 
We now remark that \eqref{e.quantestinterpol} provides an upper bound for the number of subintervals in the decomposition of $(0,T)$. Indeed, we have
\begin{align*}
\ep^5m\leq\sum_{j=1}^m\|u\|_{L^5(0,T;L^5(\R^3))}^5=\ &\|u\|_{L^5(0,T;L^5(\R^3))}^5\\
\leq\ &C(M)\exp\exp\exp\Big(\big(E^\frac{3-\delta}{3+3\delta}K^\frac{4\delta}{3+3\delta}\big)^c\Big).
\end{align*}
Hence, 
\begin{equation}\label{e.estm}
m\leq \frac{C(M)}{\ep^5}\exp\exp\exp\big(\big(E^\frac{3-\delta}{3+3\delta}K^\frac{4\delta}{3+3\delta}\big)^c\big).
\end{equation}
Finally, we estimate the growth of the $L^\infty_tL^4_x$ norm by iterating the $L^4$ energy estimate \eqref{e.Lpenerest} on the time intervals $I_j$. This yields
\begin{align*}
\mathcal E_{4,t_1}(S_1)\leq\ & \|u(\cdot,t_1)\|_{L^4(\R^3)}^4+C_4\|u\|_{L^5(\R^3\times I_1)}\mathcal E_{4,t_1}(S_1)\\
\leq\ &\|u(\cdot,t_1)\|_{L^4(\R^3)}^4+C_4\ep\mathcal E_{4,t_1}(S_1).
\end{align*}
Hence, by the definition of $\ep$ above, we can swallow the second term in the right hand side and get
\begin{equation*}
\mathcal E_{4,t_1}(S_1)\leq 2\|u(\cdot,t_1)\|_{L^4(\R^3)}^4\leq 32M^4.
\end{equation*}
Similarly, for all $1\leq j\leq m+1$, 
\begin{equation*}
\mathcal E_{4,t_j}(S_j)\leq 2\|u(\cdot,t_j)\|_{L^4(\R^3)}^4.
\end{equation*}
Iterating the above gives that for all $1\leq j\leq m+1$ 
\begin{equation*}
\mathcal E_{4,t_j}(S_j)\leq 2^j\|u(\cdot,t_1)\|_{L^4(\R^3)}^4\leq 64M^4 2^m.
\end{equation*}
Therefore, 
\begin{align*}
\|u\|_{L^\infty(c_{4}\|u_0\|_{L^{4}}^{-8},T;L^4(\R^3))}^4=\max_{1\leq j\leq m+1}\{\|u\|_{L^\infty(I_j;L^4(\R^3))}^4\}\leq 64M^4 2^m.
\end{align*}
Finally, we combine this last estimate with the estimate \eqref{e.estm} for $m$. We get
\begin{equation}\label{e.finalest}
\|u\|_{L^\infty(c_{4}\|u_0\|_{L^{4}}^{-8},T;L^4(\R^3))}^4
\leq 64M^4\exp\Big(\frac{2C(M)\log 2}{\ep^5}\exp\exp\exp\big(C_{univ}E^cK^\frac{4c\delta}{3+3\delta}\big)\Big),
\end{equation}
where we used $\frac{3-\delta}{3+3\delta}\leq 1$ for all $\delta\in(0,\frac12]$. For $M$ and $E$ sufficiently large, we choose now 
\begin{align}\label{e.defK}
K(M,E):=\ 3M\exp\Big(2^4(\log 2)C_4^5C(M)\exp\exp\exp\big(eC_{univ}E^c\big)\Big)
\in\ &[1,\infty)
\end{align}
and 
\begin{equation}\label{e.defdelta}
\delta(M,E):=\min\left(\frac{3}{4c\log(K(M,E))-3},\frac12\right)\in(0,\tfrac12].
\end{equation}
Notice that $\delta$ depends on $M$ and $E$ because $K$ does. With these choices, we have on the one hand,
\begin{equation*}
E^c(K(M,E))^\frac{4c\delta(M,E)}{3+3\delta(M,E)}\leq eE^c
\end{equation*}
and on the other hand
\begin{equation*}
64M^4\exp\Big(\frac{2C(M)\log 2}{\ep^5}\exp\exp\exp\big(E^cK^\frac{4c\delta}{3+3\delta}\big)\Big)\leq K(M,E)^4.
\end{equation*}
This concludes the proof of Theorem \ref{theo.main}.
\qed

Let us make some remarks on Theorem \ref{theo.main} and its proof, which will be particularly instructive for proving Theorem \ref{cor.orliczblowup}.

\begin{remark}\label{theo.mainlocalintime}
From the above proof of Theorem \ref{theo.main}, it is clear that if the hypothesis \eqref{slightlysupercrithypo} applies on $\mathbb{R}^3\times (0,S)$ $(0<S<\infty)$ instead of $\mathbb{R}^3\times (0,\infty)$, then the following holds true. Namely, 
$u\in L^{\infty}(0,S; L^{4}(\mathbb{R}^3))$ and hence 
 $u$ is smooth on $\mathbb{R}^3\times (0,S]$. Furthermore, notice that in view of the quantitative bound \eqref{e.quantLinftyL4}, suitability of $u$ is not required all the way up to time $S$.
\end{remark}
\begin{remark}\label{smallerdelta}
It is also clear from the above proof that replacing the hypothesis \eqref{slightlysupercrithypo} by
\begin{equation}\label{slightlysupercrithyposmallerdelta}
\|u\|_{L^{\infty}(0,\infty; L^{3-\hat{\delta}}(\mathbb{R}^3))}\leq E
\end{equation}
with $0<\hat{\delta}\leq \delta(M,E)$ gives the same conclusion as in Theorem \ref{theo.main}.

Consequently, if $M$ is sufficiently large and $E\geq M$, 
we see that the conclusion of Theorem \ref{theo.main} holds if the hypothesis \eqref{slightlysupercrithypo} is replaced by \eqref{slightlysupercrithyposmallerdelta} with  \begin{equation}\label{deltahatcondition}
 0<\hat{\delta}\leq\frac{1}{\exp\exp\exp(E^{2c})}.
 \end{equation} 
\end{remark}

\section{Blow-up of slightly supercritical Orlicz norms}
\label{sec.orlicz}

Let us first take $M$ sufficiently large as in the hypothesis of Theorem \ref{theo.main} and Remark \ref{smallerdelta}. We also choose  this $M$ such that
\begin{equation}\label{initidaldatabdM}
\|u_0\|_{L^2},\ \|u_0\|_{L^4}\leq M.
\end{equation}
To prove Theorem \ref{cor.orliczblowup}, it is sufficient to show that for $S\in (0,\infty)$, the assumption that $u$ is smooth on $\mathbb{R}^3\times (0,S)$ with 
\begin{equation}\label{e.orliczrecap}
\sup_{0<t<S}\int\limits_{\R^3}\frac{|u(x,t)|^3}{\Big(\log\log\log\big((\log(e^{e^{3e^{e}}}+|u(x,t)|))^\frac13\big)\Big)^\theta}dx\leq L<\infty
\end{equation}
and $L\geq 1$ implies that $u$ is smooth on $\mathbb{R}^3\times (0,S]$.

\noindent{\bf Step 1: estimating the $L^{3-\mu}_{x}$ norm of $u$.} Let $\mu\in (0,1)$ and $\lambda\in (1,\infty)$ be fixed parameters that will be determined below, see \eqref{mudefinition} and \eqref{lambdadef}. Note also that $\theta\in (0,1)$ is a universal constant that will be determined, see \eqref{thetadef}. For each fixed $t\in (0,S)$, we write
\begin{equation}\label{usplitting}
u(x,t)=u_{>\lambda}(x,t)+u_{\leq\lambda}(x,t),\,\,\,\textrm{where}\,\,\,u_{>\lambda}(x,t):=u(x,t)\chi_{\{x\in\mathbb{R}^3:|u(x,t)|>\lambda\}}.
\end{equation}
Our aim is to use \eqref{e.orliczrecap} to estimate the $L^{3-\mu}_{x}$ norm of $u_{>\lambda}(\cdot,t)$ and $u_{\leq\lambda}(\cdot,t)$ respectively. First, let us do this for $u_{>\lambda}(\cdot,t)$.

For each fixed $t\in (0,S)$ we perform the following splitting
\begin{equation}\label{Orliczsplit}
\int\limits_{\R^3}\frac{|u(x,t)|^3}{\Big(\log\log\log\big((\log(e^{e^{3e^{e}}}+|u(x,t)|))^\frac13\big)\Big)^\theta}dx=I_{>\lambda}(t)+I_{\leq\lambda}(t).
\end{equation}
Here,
\begin{equation}\label{Igreaterlambdadef}
I_{>\lambda}(t):=\int\limits_{\{x\in\mathbb{R}^3:|u(x,t)|>\lambda\}}\frac{|u(x,t)|^3}{\Big(\log\log\log\big((\log(e^{e^{3e^{e}}}+|u(x,t)|))^\frac13\big)\Big)^\theta}dx.
\end{equation}
There exists a  large constant $C(\theta)$ such that if
\begin{equation}\label{lambdafirstrestriction}
\lambda\geq \max(C(\theta),2)
\end{equation}
we have
$$\log(y)\geq \Big(\log\log\log\big((\log(e^{e^{3e^{e}}}+y))^\frac13\big)\Big)^\theta\,\,\,\,\,\,\forall y\in [\lambda,\infty). $$ Thus, if \eqref{lambdafirstrestriction} holds we have
\begin{equation}\label{firstinequalityIgeqlambda}
\int\limits_{\{x\in\mathbb{R}^3: |u(x,t)|>\lambda\}} |u(x,t)|^{3-\mu}\frac{|u(x,t)|^{\mu}}{\log(|u(x,t)|)} dx\leq I_{>\lambda}(t)\leq L.
\end{equation}
Let  $$f(y):=\frac{y^{\mu}}{\log(y)}.$$ Clearly
$$f'(y)\geq 0\,\,\,\textrm{for}\,\,\,y\geq \max(2, e^{\frac{1}{\mu}}).$$ Using this, together with \eqref{firstinequalityIgeqlambda}, we see that if
\begin{equation}\label{lambdasecondrestriction}
\lambda\geq\max(C(\theta),2,e^{\frac{1}{\mu}})
\end{equation} we have
\begin{equation}\label{L3-muugeqlambda}
\int\limits_{\{x\in\mathbb{R}^3: |u(x,t)|>\lambda\}} |u(x,t)|^{3-\mu} dx\leq \frac{\log(\lambda)}{\lambda^{\mu}}L.
\end{equation}
Next, let us estimate the $L^{3-\mu}_{x}$ norm of $u_{\leq\lambda}(\cdot,t)$. Clearly;
\begin{align}\label{uleqlambdaL3est}
\begin{split}
&\int\limits_{\{x\in\mathbb{R}^3:|u(x,t)|\leq\lambda\}}|u(x,t)|^3 dx\\
&=\int\limits_{\{x\in\mathbb{R}^3:|u(x,t)|\leq\lambda\}}\frac{|u(x,t)|^3\Big(\log\log\log\big((\log(e^{e^{3e^{e}}}+|u(x,t)|))^\frac13\big)\Big)^\theta}{\Big(\log\log\log\big((\log(e^{e^{3e^{e}}}+|u(x,t)|))^\frac13\big)\Big)^\theta}dx\\
&\leq L \Big(\log\log\log\big((\log(e^{e^{3e^{e}}}+\lambda))^\frac13\big)\Big)^\theta.
\end{split}
\end{align}
Now, since $u$ is a weak Leray-Hopf solution with initial data $\|u_{0}\|_{L^{2}(\mathbb{R}^3)}\leq M$, we have
\begin{equation}\label{uleqlambdaL2}
\int\limits_{\{x\in\mathbb{R}^3:|u(x,t)|\leq\lambda\}}|u(x,t)|^2 dx\leq M^2.
\end{equation}
From the interpolation of Lebesgue spaces we have
$$ \|u_{\leq\lambda}(\cdot,t)\|_{L^{3-\mu}(\mathbb{R}^3)}^{3-\mu}\leq \big(\|u_{\leq\lambda}(\cdot,t)\|_{L^{2}(\mathbb{R}^3)}^2\big)^{\mu}\big(\|u_{\leq\lambda}(\cdot,t)\|_{L^{3}(\mathbb{R}^3)}^3\big)^{1-\mu}.$$ 
Using this, together with \eqref{uleqlambdaL3est}-\eqref{uleqlambdaL2}, gives
\begin{equation}\label{uleqlambdaL3minusmuest}
\int\limits_{\{x\in\mathbb{R}^3:|u(x,t)|\leq\lambda\}}|u(x,t)|^{3-\mu} dx\leq M^{2\mu}\Big(L \Big(\log\log\log\big((\log(e^{e^{3e^{e}}}+\lambda))^\frac13\big)\Big)^\theta\Big)^{1-\mu}.
\end{equation}
By using \eqref{L3-muugeqlambda}, \eqref{uleqlambdaL3minusmuest} and noting that $M,L\geq 1$ and $\mu\in (0,1)$, we obtain the following. Namely, when $\lambda$ satisfies \eqref{lambdasecondrestriction} we have that for all $t\in (0,S)$
\begin{multline}\label{uL3minusmuest}
\|u(\cdot,t)\|_{L^{3-\mu}(\mathbb{R}^3)}\leq \max\Big(\Big(\frac{\log(\lambda)L}{\lambda^\mu}\Big)^{\frac{1}{2}},\Big(\frac{\log(\lambda)L}{\lambda^\mu}\Big)^{\frac{1}{3}}\Big)\\+ML^{\frac{1}{3}}\Big(\log\log\log\big((\log(e^{e^{3e^{e}}}+\lambda))^\frac13\big)\Big)^\frac{\theta}{3}.
\end{multline} 
Note that since $L\geq 1$, this bound is larger than $M$.
\\
\noindent{\bf Step 2: choosing $(\mu,\lambda,\theta)$ and applying Theorem \ref{theo.main}.}
Define 
\begin{equation}\label{thetadef}
\theta:=\frac{1}{2c}\in (0,1),
\end{equation}
where $c\in(1,\infty)$ is the universal constant appearing in Proposition \ref{prop.step1}, equation \eqref{e.estL5tx}. Next define
\begin{equation}\label{mudefinition}
\mu:=\frac{1}{(\log(\lambda))^{\frac{1}{2}}},
\end{equation}
where $\lambda\geq 2$ is to be determined. For $\lambda\geq e$ it is clear that $\lambda\geq e^{\frac{1}{\mu}}$. Furthermore, $$\lim_{\lambda\uparrow\infty}\Big(\frac{\log(\lambda)}{\lambda^{\mu}}\Big)=0. $$
Hence, there exists a $\lambda_{0}(M,L)$ such that for all $\lambda\geq \lambda_{0}(M,L)$ we have
\begin{equation}\label{logbounds}
\Big(\log(e^{e^{3(e^e)}}+\lambda)\Big)^{\frac{1}{3}}\leq (\log(\lambda))^{\frac{1}{2}}
\end{equation}
and
\begin{equation}\label{L3minusmuboundE}
\max\Big(\Big(\frac{\log(\lambda)L}{\lambda^\mu}\Big)^{\frac{1}{2}},\Big(\frac{\log(\lambda)L}{\lambda^\mu}\Big)^{\frac{1}{3}}\Big)+ML^{\frac{1}{3}}\Big(\log\log\log\big((\log(e^{e^{3e^{e}}}+\lambda))^\frac13\big)\Big)^\frac{1}{6c}\leq E. 
\end{equation}
 Here,
\begin{equation}\label{Edef}
E:=\Big(\log\log\log\big((\log(e^{e^{3e^{e}}}+\lambda))^\frac13\big)\Big)^\frac{1}{2c}.
\end{equation}
Now we choose any 
\begin{equation}\label{lambdadef}
\lambda\geq\max\Big(C(\theta),e,\lambda_{0}(M,L)\Big),
\end{equation}
with $C(\theta)$ being as in \eqref{lambdasecondrestriction}. 
With this choice of $\lambda$, we see that the conclusion \eqref{uL3minusmuest} holds. Hence, we obtain from \eqref{L3minusmuboundE} that
\begin{equation}\label{uL3minusmusecondest}
\|u\|_{L^{\infty}(0,S; L^{3-\mu}(\mathbb{R}^3))}\leq E\,\,\,\textrm{with}\,\,\,E\geq M.
\end{equation}
From \eqref{logbounds}, we clearly have
\begin{equation}\label{muupper}
0<\mu\leq \frac{1}{\exp\exp\exp(E^{2c})}.
\end{equation}
Taking $\hat{\delta}:=\mu$ and observing Remark \ref{theo.mainlocalintime} and Remark \ref{smallerdelta}, we see that we can apply Theorem \ref{theo.main} to obtain that $u$ is smooth on $\mathbb{R}^3\times (0,S]$. This concludes the proof.\qed

\section{Final discussion: the case of Type I bounds}
At first sight one might wonder if Theorem \ref{theo.main} could be used to resolve the following longstanding open problem. Namely, whether a suitable weak Leray-Hopf solution, with initial data $u_{0}$, satisfying the conditions
\begin{equation}\label{initialdataassumption}
\|u_{0}\|_{L^2},\|u_{0}\|_{L^{4}}\leq M
\end{equation}
and 
\begin{equation}\label{TypeIbound}
\|u\|_{L^{\infty}(0,\infty; L^{3,\infty}(\mathbb{R}^3))}\leq M
\end{equation}
 (for arbitrarily large $M$) is smooth on $\mathbb{R}^3\times (0,\infty)$. In the literature \eqref{TypeIbound} is sometimes known as a `Type I bound'.
 
Unfortunately, it seems that Theorem \ref{theo.main} cannot be used to resolve this open problem. Indeed, using interpolation of Lorentz spaces \cite[Lemma 2.2]{mccormick2013generalised} we see that for $\delta\in (0,\frac{1}{2}]$ and 
 $$\theta=\frac{6}{3-\delta}-2 $$
 we have the following:
 \begin{equation}\label{interpolationweakL3}
 \|u(\cdot,t)\|_{L^{3-\delta}}\leq \Big(\frac{3-\delta}{1-\delta}+\frac{3-\delta}{\delta}\Big)\|u(\cdot,t)\|_{L^{2,\infty}}^{\theta}\|u(\cdot,t)\|_{L^{3,\infty}}^{1-\theta}\leq \frac{6M}{\delta}.
 \end{equation}
 Hence, $$\|u\|_{L^{\infty}(0,\infty; L^{3-\delta}(\mathbb{R}^3))}\leq E(M,\delta):= \frac{6M}{\delta}. $$
 For $E$ defined in this way, the requirement \eqref{e.defdelta} (coming from the proof of Theorem \ref{theo.main}) becomes an equation for $\delta$ for which there is no solution when $M$ is large. Hence \eqref{e.defdelta} cannot be satisfied.

\subsection*{Funding and conflict of interest.} 

The first author is supported by a Leverhulme Early Career Fellowship funded by The Leverhulme Trust. The second author is partially supported by the project BORDS grant ANR-16-CE40-0027-01 and by the project SingFlows grant ANR-18-CE40-0027 of the French National Research Agency (ANR). The authors declare that they have no conflict of interest. 
\subsection*{Acknowledgement}
The second author is grateful to Patrick G\'erard for bringing to his attention the result of Aynur Bulut \cite{Bulut20}.

\small 
\bibliographystyle{abbrv}
\bibliography{concentration.bib}

\end{document}